\def\Fix{\operatorname{Fix}}
\DeclareMathOperator*{\prodB}{\prod\nolimits^{\mathrm{B}}}
\DeclareMathOperator*{\prodc}{\prod\nolimits^{\mathrm{c}}}
\DeclareMathOperator*{\prodec}{\prod\nolimits^{\mathrm{ec}}}
\begin{document}

\title{Unitary representations of locally compact groups as metric structures}

\author{Itaï \textsc{Ben Yaacov}}

\address{Itaï \textsc{Ben Yaacov},
  Univ Lyon,
  Université Claude Bernard Lyon 1,
  Institut Camille Jordan, CNRS UMR 5208,
  43 boulevard du 11 novembre 1918,
  69622 Villeurbanne Cedex,
  France}

\urladdr{\url{http://math.univ-lyon1.fr/~begnac/}}

\author{Isaac \textsc{Goldbring}}

\address{Isaac Goldbring, Department of Mathematics\\University of California, Irvine, 340 Rowland Hall (Bldg.\# 400),
  Irvine, CA 92697-3875, USA}
\email{\url{isaac@math.uci.edu}}
\urladdr{\url{http://www.math.uci.edu/~isaac}}

\thanks{The first author is partially supported by ANR grant AGRUME (ANR-17-CE40-0026).}
\thanks{The second author is partially supported by NSF grant DMS-2504477.}

\svnId $Id: LCGAction.tex 4793 2021-11-03 14:27:26Z begnac $
\thanks{\textit{Revision} {\svnRevision} \textit{of} \svnDate}

% \date{\today}
% \keywords{}
% \subjclass[2010]{}

\begin{abstract}
  For a locally compact group $G$, we show that it is possible to present the class of continuous unitary representations of $G$ as an elementary class of metric structures, in the sense of continuous logic.
  More precisely, we show how non-degenerate $*$-representations of a general $*$-algebra $A$ (with some mild assumptions) can be viewed as an elementary class, in a many-sorted language, and use the correspondence between continuous unitary representations of $G$ and non-degenerate $*$-representations of $L^1(G)$.

  We relate the notion of ultraproduct of logical structures, under this presentation, with other notions of ultraproduct of representations appearing in the literature, and characterise property (T) for $G$ in terms of the definability of the sets of fixed points of $L^1$ functions on $G$.
\end{abstract}

\maketitle

\tableofcontents

\section*{Introduction}

When suggesting a model-theoretic treatment of a mathematical object, or of a class of such objects, one must first present the said object(s) as logical structures.
In other words, to each of the objects in question we associate a logical structure, from which the original object can be recovered.

In some cases, such as fields or groups, this step is so straightforward that it is hardly noticed.
In others, such as valued fields, there is some (very small) degree of freedom, and one would say they can be viewed as structures in this language, or in that.
When dealing with metric structures in the formalism of continuous logic, a new difficulty arises, namely that one might wish to consider an unbounded metric space, such as a Banach space, in a logic which can only consider bounded metric spaces.

We know of two potential remedies for this.
First, one can sometimes extend the logic to one which does allow unbounded structures, with some price to pay at the level of technical complexity.
Second, one can sometimes argue that the unbounded structure can in fact be represented by a bounded structure (possibly many-sorted).
The second solution applies quite frequently in the case of Banach space structures.
Indeed, one can recover the entire Banach space from its unit ball equipped with the structure of a convex space.
This practice of restricting the domain of quantification to the unit ball is quite standard in other contexts -- for example, when defining the operator norm.

Whether such a presentation is ``correct'' is very context dependent, of course, but usually there are two or three good indicators for that, especially when dealing with a natural class of objects.
Either of the second or third conditions implies the first, and all three are ``usually'' satisfied (or not) simultaneously.
\begin{enumerate}
\item The class of (presentations of) objects should be closed under logical ultraproducts.
\item The class of (presentations of) objects should be elementary.
\item When there already exists a useful intrinsic notion of ultraproduct in the class, it should agree with the logical one.
\end{enumerate}

The unit ball of a Banach space is a good example of the positive case, where the intrinsic ultraproduct of Banach spaces (Dacunha-Castelle and Krivine \cite{DacunhaCastelle-Krivine:Ultraproduits}) coincides with the logical ultraproduct of their unit balls.

A failure of a logical presentation to satisfy these criteria often comes in one of two flavours, which we refer to as \emph{missing points} and \emph{extraneous points}, respectively.
As an example, let us consider two natural yet misguided attempts to represent the class of continuous unitary representations of a fixed non-discrete topological group $G$.
\begin{itemize}
\item A first naïve approach would be to consider the structure consisting of the unit ball of a Hilbert space, together with a function symbol for the action of each $g \in G$.
  While the map $g \mapsto g\xi$ is continuous (at $1 \in G$) for each $\xi$ in the structure, the family of such maps is not necessarily equicontinuous, and a logical ultrapower may well contain $\xi$ such that $g \mapsto g\xi$ is not continuous at all.
  This is an ``extraneous point'' (one kind of non-logical ultraproduct of continuous representations is defined exactly by excluding such points from the Banach space ultraproduct).
\item A next attempt might be to consider, for each modulus of continuity at $1$, the sort of all $\xi$ in the unit ball such that $g \mapsto g\xi$ satisfies that modulus of continuity.
  In such a structure, associated to a continuous representation, $G$ acts continuously, and even equicontinuously, on each sort, and this is preserved by ultraproducts.

  However, such a structure must also satisfy a subtler condition.
  Say $A$ and $B$ are two moduli of continuity, with $B$ stronger than $A$.
  Then the associated sorts must satisfy an inclusion relation $S_B \subseteq S_A$.
  Moreover, any point of $S_A$ that happens to satisfy the modulus $B$ must belong to $S_B$ -- and this property need not be preserved under ultraproducts.
  If this fails, then we may say that the sort $S_B$ is ``missing'' some points.
\end{itemize}

In the case of a locally compact $G$ we propose a solution, by splitting the representation into sorts not by moduli of continuity, but as images of the action of $L^1(G)$.
The resulting structure may be presented most elegantly as a non-degenerate representation of the $*$-algebra $L^1(G)$.
We therefore begin with a general discussion of the presentation of non-degenerate representations of $*$-algebras as logical structures in \autoref{sec:NonDegenerateStarRepresentations}.
How this specialises to representations of a locally compact $G$ is discussed in \autoref{sec:ContinuousUnitaryRepresentationsLocallyCompactGroups} (this is fairly standard and mostly included for the sake of completeness).
In \autoref{sec:Ultraproduct} we put the notion of ultraproduct associated with out logical structures in the context of notions of ultraproduct of unitary representations existing in the literature.
Finally, in \autoref{sec:PropertyT} we give a model-theoretic characterisation of Kazhdan's property (T) in $G$ in terms of definability of the set(s) of fixed points in the associated structure.

\section{Non-degenerate $*$-representations}
\label{sec:NonDegenerateStarRepresentations}

Throughout, by an \emph{algebra} we mean a complex algebra, i.e., a ring $A$, not necessarily commutative or unital, that is also a complex vector space, satisfying $\alpha(ab) = (\alpha a) b = a (\alpha b)$ for all $a,b \in A$ and $\alpha \in \bC$.
An algebra equipped with a semi-linear involution $*$ satisfying $(ab)^* = b^* a^*$ is a \emph{$*$-algebra}.
An algebra equipped with a norm satisfying $\|ab\| \leq \|a\|\|b\|$ is a \emph{normed algebra}, and a \emph{Banach algebra} if it is complete.
If it is both a normed (Banach) algebra and a $*$-algebra, and $\|a^*\| = \|a\|$, then it is a \emph{normed (Banach) $*$-algebra}.
A \emph{$C^*$-algebra} is a Banach $*$-algebra in which $\|a a^*\| = \|a\|^2$.

A \emph{morphism} of normed algebras is a bounded linear map that respects multiplication, and a \emph{$*$-morphism} of normed $*$-algebras is a normed algebra morphism that respects the involution.
A \emph{$*$-representation} of a $*$-algebra $A$ in a Hilbert space $E$ is a $*$-morphism $\pi\colon A \rightarrow B(E)$.

\begin{fct}
  \label{fct:StarMorphismContractive}
  Let $A$ be a Banach $*$-algebra and $B$ a $C^*$-algebra.
  Then any $*$-morphism $\varphi\colon A \rightarrow B$ is contractive.
  In particular, any $*$-representation of a Banach $*$-algebra is contractive.
\end{fct}
\begin{proof}
  See Folland~\cite[Proposition~1.24(b)]{Folland:AbstractHarmonicAnalysis}.
  The hypothesis that $B$ is unital is superfluous by \cite[Proposition~1.27]{Folland:AbstractHarmonicAnalysis}.
\end{proof}

Given a Banach $*$-algebra $A$, a $*$-representation of $A$ can be naturally viewed as a single-sorted metric structure.
Indeed, all we need to do is take the unit ball of a Hilbert space $E$, and for each $a \in A$ of norm at most one, name the operator $\pi(a)$ in the language.
The class of all such structures is elementary, defined by universal axioms (modulo the axioms for a unit ball of a Banach space), and if $A$ is separable, then by choosing a dense sub-family of $A$ the language can be made countable.

This is a little too easy, and falls short of what we want to achieve.
When $A$ is unital, it would be natural to add the requirement that $\pi(1) = \id$.
In the general case, this requirement can be replaced with non-degeneracy (see for example \cite[Section~4]{Cherix-Cowling-Straub:FilterProducts}).

\begin{dfn}
  \label{dfn:NondegenerateRepresentation}
  Let $A$ be a Banach algebra and let $\pi\colon A \rightarrow B(E)$ be a $*$-representation.
  The \emph{non-degenerate part} of the representation, let us call it $E^\pi$, is the closed subspace generated by all $\pi(a) \xi$ for $a \in A$ and $\xi \in E$.
  If $E^\pi = E$, then $\pi$ is \emph{non-degenerate}.
\end{dfn}

\begin{fct}
  \label{fct:NondegenerateRepresentation}
  Let $\pi\colon A \rightarrow B(E)$ be a $*$-representation of a Banach $*$-algebra $A$, and let $\xi \in E$.
  \begin{enumerate}
  \item We have $\xi \perp E^\pi$ if and only if $\pi(a) \xi = 0$ for every $a \in A$.
    In particular, the $*$-representation restricts to a non-degenerate one on $E^\pi$.
  \item Assume that $(e_\alpha)$ is a left approximate unit for $A$.
    Then $\xi \in E^\pi$ if and only if $\pi(e_\alpha) \xi \rightarrow \xi$.
    Equivalently (given the first item), for any $\xi \in E$, the sequence $\pi(e_\alpha)\xi$ converges to the orthogonal projection of $\xi$ to $E^\pi$.
  \end{enumerate}
\end{fct}
\begin{proof}
  The first item follows from the identity $\bigl\langle \xi, \pi(a) \zeta \bigr\rangle = \bigl\langle \pi(a^*) \xi, \zeta \bigr\rangle$.
  For the second, if $\pi(e_\alpha) \xi \rightarrow \xi$, then $\xi \in E^\pi$ by definition.
  For the converse, we may assume that $\xi$ is of the form $\pi(a) \zeta$.
  Then, by \autoref{fct:StarMorphismContractive}:
  \begin{gather*}
    \|\pi(e_\alpha) \xi - \xi\|
    = \|\pi(e_\alpha a) \zeta - \pi(a) \zeta\|
    \leq \| e_\alpha a - a \| \|\zeta\| \rightarrow 0.
    \qedhere
  \end{gather*}
\end{proof}

If $A$ does not have a unit, then the class of non-degenerate $*$-representations of $A$, presented naïvely as above, need not be elementary, so something better needs to be done.
The problem is that we cannot express an infinite disjunction such as ``there exists $a \in A$ such that $\xi$ is close to the image of $\pi(a)$''.
We solve this by using a many-sorted language: for each $a \in A$ we shall have a sort $S_a$, consisting of the closure of the image of the unit ball under $\pi(a)$, and all that is left is to express the interactions between these sorts.

In what follows, by a \emph{symmetric convex space}, we mean a complete convex subset of a Banach space closed under opposite.
Following \cite{BenYaacov:NakanoSpaces}, a (bounded) symmetric convex space will be considered as a metric structure in the language $\{0,-,\half[x+y]\}$, where $-$ is the unary opposite operation and $\half[x+y]$ is the binary average operation.
If $E$ is a real Banach space and $C \subseteq E$ is a symmetric convex set that generates a dense subset of $E$, then $E$ can be recovered from $C$.
Linear maps can be recovered using the following easy result.

\begin{lem}
  \label{lem:ConvexSpaceLinearMap}
  Let $E$ and $F$ be real normed spaces, $C \subseteq E$ a symmetric convex generating subset.
  Assume that $f\colon C \rightarrow F$ is bounded in the sense that $\|f(x)\| < \alpha \|x\|$ for some $\alpha \in \bR$.
  Then the following are equivalent:
  \begin{enumerate}
  \item The map $f$ respects the convex structure: $f(0) = 0$ and $f(\half[x+y]) = \half[f(x) + f(y)]$.
  \item The map $f$ is additive: $f(x+y) = f(x) + f(y)$ whenever $x,y,x+y \in C$.
  \item The map $f$ extends to a linear bounded map $E \rightarrow F$.
  \end{enumerate}
\end{lem}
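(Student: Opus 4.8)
The implications (3) $\Rightarrow$ (1) $\Rightarrow$ (2) are immediate: a linear map certainly preserves $0$ and averages, and preservation of averages together with $f(0)=0$ gives $f(x+y) = f(2 \cdot \half[x+y]) $; more carefully, if $x,y,x+y \in C$ then $\half[x+y] \in C$ by convexity, and $f(\half[x+y]) = \half[f(x)+f(y)]$, while one must separately check $f(x+y) = 2 f(\half[x+y])$, which follows since $\half[(x+y) + 0] = \half[x+y]$ gives $f(\half[x+y]) = \half[f(x+y) + f(0)] = \half f(x+y)$. So the only real content is (2) $\Rightarrow$ (3).

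For (2) $\Rightarrow$ (3), the plan is to define the extension $\tilde f$ on the linear span of $C$ (which is dense in $E$) and then extend by uniform continuity. First I would show that for $x \in C$ and any positive integer $n$, $f(x/n) = f(x)/n$: apply additivity $n$ times to $x/n + \cdots + x/n = x$, noting that all partial sums $kx/n$ lie in $C$ by symmetry and convexity (they are convex combinations of $0$ and $x$). Combined with additivity this gives $f(qx) = q f(x)$ for all nonnegative rationals $q$ with $qx \in C$, and then $f(-x) = -f(x)$ handles negative scalars since $C$ is closed under opposite. Now every $v$ in the span of $C$ can be written as $v = \lambda(x - y)$ with $x, y \in C$ and $\lambda > 0$ (scale so that $x,y$ lie in $C$); I would define $\tilde f(v) = \lambda(f(x) - f(y))$ and check this is well-defined by reducing two representations to a common one using additivity. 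Linearity of $\tilde f$ on the span then follows from the rational homogeneity and additivity already established, upgraded to real homogeneity using the norm bound $\|f(x)\| < \alpha\|x\|$, which forces $\lambda \mapsto \tilde f(\lambda x)$ to be continuous and hence linear. Finally, the bound $\|\tilde f(v)\| \le \alpha \|v\|$ on the dense span lets $\tilde f$ extend uniquely to a bounded linear map $E \to F$ (here completeness of $F$ as a normed space is what one needs, though since $f$ maps into $F$ and $F$ is merely normed, the extension lands in the completion of $F$; one should either assume $F$ complete or note the image stays in $F$ because $C$ is complete and $f$ is already defined on $C$ with the right values).

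The step I expect to require the most care is verifying that $\tilde f$ is \emph{well-defined} on the span of $C$, i.e.\ that $\lambda(x-y) = \lambda'(x'-y')$ implies $\lambda(f(x)-f(y)) = \lambda'(f(x')-f(y'))$. The clean way is to clear denominators and rearrange into an equation $x + y' + \cdots = x' + y + \cdots$ among elements of $C$ where, after scaling everything down by a large enough integer, every intermediate partial sum stays inside $C$ (using convexity with $0$); then repeated application of hypothesis (2) transfers the identity to the $f$-values. The norm bound is essential twice: once to pass from $\mathbb{Q}$-homogeneity to $\mathbb{R}$-homogeneity, and once to get the continuous extension to all of $E$.
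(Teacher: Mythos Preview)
Your argument is correct and follows the same cyclic route as the paper: (1) $\Rightarrow$ (2) via $f(z/2)=f(z)/2$ applied to $z=x+y$, then (2) $\Rightarrow$ (3) by extending additively and using boundedness to pass from $\mathbb{Q}$-linearity to $\mathbb{R}$-linearity, with (3) $\Rightarrow$ (1) trivial. The paper's write-up of (2) $\Rightarrow$ (3) is a single sentence (``additive on $C$ extends to additive on $E$, hence $\mathbb{Q}$-linear and bounded, hence $\mathbb{R}$-linear''), whereas you spell out the well-definedness check; note also that here ``generating'' means the linear span of $C$ is all of $E$, so your worry about completeness of $F$ and extension from a dense subspace does not arise.
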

\begin{proof}
  \begin{cycprf}
  \item For $x \in C$ we have $f(x/2) = f(\half[x+0]) = \half[f(x)+0] = f(x)/2$.
    Therefore, if $x,y,x+y \in C$, then
    \begin{gather*}
      f(x+y) = 2 f\left( \half[x+y] \right) = 2 \half[f(x)+f(y)] = f(x) + f(y).
    \end{gather*}
  \item If $f$ is additive on $C$, then it extends to an additive map $E \rightarrow F$.
    Such a map is necessarily $\bQ$-linear and bounded with the same constant $\alpha$, so it is $\bR$-linear.
  \item[\impfirst]
    Immediate.
  \end{cycprf}
\end{proof}

\begin{dfn}
  Let $A$ be a Banach $*$-algebra, and $\pi\colon A \rightarrow B(E)$ a non-degenerate $*$-representation.
  We associate to it a multi-sorted structure $M = M(E,\pi)$ constructed as follows.
  \begin{itemize}
  \item For each $a \in A$, $M$ admits a sort $S_a = \overline{\pi(a) E_{\leq 1}}$, where $E_{\leq 1}$ denotes the unit ball of $E$.
  \item Each sort is equipped with the structure of a symmetric convex space, as well as with a symbol for multiplication by $i$.
  \item For any $a,b \in A$, we name the restriction to $S_a \times S_b$ of the real part of the inner product: $[\xi,\zeta] = \Re \langle \xi,\zeta \rangle$.
    Since one such predicate exists for each pair of sorts, we may sometimes write $[\cdot,\cdot]_{a,b}$.
  \item For any $a,b \in A$, the map $\pi(a) \colon S_b \rightarrow S_{ab}$ is named by a function symbol $\pi_a$.
  \end{itemize}
\end{dfn}

All the symbols are bounded and uniformly continuous in a manner that does not depend on the choice of $(E,\pi)$, so these can all be viewed as structures in a common language, call it $\cL^A$.
Of course, the same construction applies even if $(E,\pi)$ has a degenerate part, but this degenerate part will not be reflected in any way in the structure $M(E,\pi)$.

We define $T^A$ to be the theory consisting of the following axiom schemes that we explain shortly.
All the axioms are either stated in continuous logic, or can easily be.
Axioms \autoref{ax:Conv} to \autoref{ax:Complex} are universal, with implicit universal quantifiers.

\begin{align*}
  \tag{Conv} \label{ax:Conv} & \text{Each $S_a$ is a symmetric convex space of radius $\leq \|a\|$,}
  \\
  \tag{Sym} \label{ax:Sym} & [\xi,\zeta] = [\zeta,\xi],
  \\
  \tag{Lin1} \label{ax:Lin1} & \!\! \left[ \xi, \half[\zeta+\zeta'] \right] = \half[{[\xi,\zeta] + [\xi,\zeta']}],
  \\
  \tag{Lin2} \label{ax:Lin2} & [ \xi, 0 ] = 0,
  \\
  \tag{Norm} \label{ax:Norm} & [\xi,\xi] = \|\xi\|^2,
  \\
  \tag{Pos} \label{ax:Pos} & \sum_{i,j=1}^n [\xi_i,\xi_j] \geq 0,
  \\
  \intertext{From here on, let $\|\sum \xi_i\|$ be short for $\sqrt{\sum [\xi_i,\xi_j]}$.}
  \tag{Pi1} \label{ax:Pi1} & \left\| \sum \pi_a \xi_i \right\| \leq \|a\| \|\sum \xi_i\|,
  \\
  \tag{Pi2} \label{ax:Pi2} & a \mapsto \pi_a \ \text{is a $*$-morphism},
  \\
  \tag{Complex} \label{ax:Complex} & i\colon S_a \rightarrow S_a \ \text{respects all other symbols,} \ i^2\xi = -\xi,
  \\
  \tag{BallImg} \label{ax:BallImg} & \sup_{\xi_i \in S_{b_i}}\inf_{\zeta \in S_a} \, \left\| \sum \pi_a \xi_i - \zeta \right\| \leq \left| \left\| \sum \xi_i \right\| - 1 \right| \|a\|,
  \\
  \tag{DenseImg} \label{ax:DenseImg} & \sup_{\xi \in S_{ab}} \ \inf_{\zeta \in S_b} \, \left\| \pi_a \zeta - \xi \right\| = 0,
  \\
  \tag{HausDist} \label{ax:HausDist} & \sup_{\xi \in S_a} \ \inf_{\zeta \in S_b} \, \left\| \zeta - \xi \right\| \leq \|a-b\|.
\end{align*}

Clearly, every structure of the form $M(E,\pi)$ is a model of $T^A$.
Now let $M$ be any model of $T^A$, or, at a first time, merely of \autoref{ax:Conv} to \autoref{ax:Complex}.

\begin{enumerate}
\item
  Axiom \autoref{ax:Conv} requires that each sort $S_a$ be a symmetric convex space of radius at most $\|a\|$.
  This is indeed expressible in continuous logic and the generated real normed space can be recovered, call it $E_a$ (see for example \cite{BenYaacov:NakanoSpaces}).
\item
  For each pair $a,b \in A$, axioms \autoref{ax:Sym} to \autoref{ax:Lin2} require $[\cdot,\cdot]$ on $S_a \times S_b$ to be symmetric and $\bR$-bilinear -- that it to say that it extends (uniquely) to an $\bR$-bilinear form on $E_a \times E_b$, as per \autoref{lem:ConvexSpaceLinearMap}.
  By axiom \autoref{ax:Norm}, it defines the norm on $E_a$.
\item
  Let $F^1 = \bigoplus_{a \in A} E_a$.
  If $\xi,\zeta \in F^1$, say $\xi = \sum \xi_i$ and $\zeta = \sum \zeta_i$ where $\xi_i,\zeta_i \in E_{a_i}$, let $[\xi,\zeta] = \sum_{i,j} [\xi_i,\zeta_j]$.
  This defines a symmetric $\bR$-bilinear form on $F^1$, and axiom \autoref{ax:Pos} requires it to be positive semi-definite.
  It follows that $\|\sum \xi_i\| = \sqrt{\sum [\xi_i,\xi_j]}$ is a semi-norm.
  Let $F^0 \subseteq F^1$ be its kernel.
  Then $F = \overline{F^1 / F^0}$ with the induced norm is a real Hilbert space.
\item
  Axiom \autoref{ax:Pi1} implies, first of all, that $\pi_a\colon S_b \rightarrow S_{ab} \subseteq E_{ab} \subseteq F$ is bounded:
  \begin{gather}
    \label{eq:PiBounded}\|
    \pi_a \xi\| \leq \|a\| \|\xi\|.
  \end{gather}
  The axiom also implies that $\pi_a\colon S_b \rightarrow F$ is additive, in the sense of \autoref{lem:ConvexSpaceLinearMap}, and therefore it extends uniquely to an $\bR$-linear map $E_b \rightarrow F$.
  These combine to a single $\bR$-linear map $\pi_a\colon F^1 \rightarrow F$, and axiom \autoref{ax:Pi1} implies that this combined map also satisfies \autoref{eq:PiBounded}.
  Therefore it induces an operator $\sigma(a) \in B(F)$ of norm at most $\|a\|$.
\item
  Now that we have a map $\sigma\colon A \rightarrow B(F)$, we require it to be a $*$-morphism (with respect to the real inner product $[\cdot,\cdot]$).
  This consists of a long list of identities, which we chose to omit (axiom \autoref{ax:Pi2}).
\item
  Axiom \autoref{ax:Complex} means that multiplication by $i$ is isometric and linear (\autoref{lem:ConvexSpaceLinearMap} again), putting a complex structure on $F$.
  Since $i$ commutes with each $\pi_a$, each $\sigma(a)$ is $\bC$-linear.
  Following the convention that a sesquilinear form is linear in the first argument and semi-linear in the second, we may recover a complex inner product by $\langle \xi,\zeta \rangle = [\xi,\zeta] + i [\xi,i \zeta]$.
\end{enumerate}

If $(E,\pi)$ is a non-degenerate $*$-representation and $M = M(E,\pi)$, then the isometric embeddings $S_a = \overline{\pi(a) E_{\leq 1}} \hookrightarrow F^1 \rightarrow F$ glue (by non-degeneracy) to a canonical isometric linear bijection $E \rightarrow F$, which is the desired isomorphism of $*$-representations $(E,\pi) \cong (F,\sigma)$ (if $(E,\pi)$ is degenerate, then we recover its non-degenerate part).

If $M$ is an arbitrary model of \autoref{ax:Conv} to \autoref{ax:Complex}, then we recover a $*$-representation $(F,\sigma)$.
However, $M(F,\sigma)$ need not be isomorphic to $M$, since we still need to say that $S_a \subseteq F$ is exactly $\overline{\sigma(\pi) F_{\leq 1}}$.
This will follow from the three last axioms, provided we make one additional hypothesis regarding $A$: that for every $a \in A$ there exists $b \in A$ of norm one, such that $ab$ is arbitrarily close to $a$.
This holds, in particular, if $A$ admits a right approximate identity $(e_\alpha)$ of norm one (in which case $(e_\alpha^*)$ is a left approximate identity, and $(e_\alpha + e_\alpha^* - e_\alpha e_\alpha^*)$ is a two-sided approximate identity, albeit not necessarily of norm one).

\begin{enumerate}
\item The inclusion $S_a \supseteq \overline{\sigma(\pi) F_{\leq 1}}$ is exactly axiom \autoref{ax:BallImg}.
\item For the opposite inclusion, let $a \in A$ and $\varepsilon > 0$, and choose $b \in A$ of norm one such that $\|a-ab\|< \varepsilon$.
  By axioms \autoref{ax:DenseImg} and \autoref{ax:HausDist}, for any $\xi \in S_a$ there exists $\zeta \in S_b$ such that $\|\pi_a \zeta - \xi\| < \varepsilon$, and $\|\zeta\| \leq \|b\| = 1$.
\end{enumerate}

Putting this all together, we have proven the following.

\begin{thm}
  \label{thm:NonDegenerateRepresentationClass}
  Let $A$ be a Banach $*$-algebra.
  Assume that $A$ admits an approximate one-sided identity of norm one, or merely that every $a \in A$ belong to $\overline{a A_{\leq 1}}$.

  Then the class of non-degenerate $*$-representations of $A$ can be identified with the class of models of $T^A$, and therefore may be considered to be an elementary class.
\end{thm}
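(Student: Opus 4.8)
The plan is to establish the two directions of the claimed identification; the elementarity of the resulting class is then a formality, since $T^A$ is visibly a set of closed conditions in the language $\cL^A$ (each displayed axiom is, or is trivially equivalent to, a continuous $\cL^A$-sentence), so its class of models is elementary by definition. Thus the real content is that a structure is a model of $T^A$ precisely when it is (isomorphic to) $M(E,\pi)$ for a non-degenerate $*$-representation $(E,\pi)$ of $A$, and that $(E,\pi) \mapsto M(E,\pi)$ and the reconstruction sketched above are mutually inverse up to isomorphism.

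For the easy direction I would check that $M(E,\pi) \models T^A$ for every non-degenerate $(E,\pi)$. Axioms \autoref{ax:Conv}--\autoref{ax:Complex} are immediate from the construction: the sorts $S_a = \overline{\pi(a)E_{\le 1}}$ are symmetric convex subsets of the Hilbert space $E$ of radius $\le \|\pi(a)\| \le \|a\|$ (using \autoref{fct:StarMorphismContractive}), the real inner product is symmetric and $\bR$-bilinear and computes the norm, Gram matrices are positive semi-definite, $\pi$ is a contractive $*$-morphism, and multiplication by $i$ has the stated properties. For \autoref{ax:BallImg} note that $\|\sum \xi_i\| \le 1$ forces $\sum \pi_a\xi_i = \pi(a)\bigl(\sum\xi_i\bigr) \in \pi(a)E_{\le1} \subseteq S_a$, and a short perturbation estimate (rescale $\sum\xi_i$ by $1/\|\sum\xi_i\|$) covers the case $\|\sum\xi_i\| > 1$; \autoref{ax:DenseImg} holds because $S_{ab} = \overline{\pi(a)\pi(b)E_{\le1}} \subseteq \overline{\pi_a S_b}$; and \autoref{ax:HausDist} follows by approximating $\xi \in S_a$ by some $\pi(a)\eta$ with $\|\eta\| \le 1$ and noting $\pi(b)\eta \in S_b$ with $\|\pi(a)\eta - \pi(b)\eta\| \le \|a-b\|$.

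Conversely, given an arbitrary model $M \models T^A$, I would run the reconstruction described before the statement: axiom \autoref{ax:Conv} recovers from each sort $S_a$ the real normed space $E_a$ it generates; axioms \autoref{ax:Sym}--\autoref{ax:Norm} make $[\cdot,\cdot]_{a,b}$ a symmetric $\bR$-bilinear form on $E_a \times E_b$ computing the norm (\autoref{lem:ConvexSpaceLinearMap}); on $F^1 = \bigoplus_a E_a$ the form $[\sum\xi_i, \sum\zeta_j] = \sum_{i,j}[\xi_i,\zeta_j]$ is positive semi-definite by \autoref{ax:Pos}, and $F$ is the Hilbert-space completion of $F^1$ modulo the kernel of the associated seminorm; by \autoref{ax:Pi1} and \autoref{lem:ConvexSpaceLinearMap} each $\pi_a$ extends to an $\bR$-linear map $E_b \to F$, assembling to $\sigma(a) \in B(F)$ with $\|\sigma(a)\| \le \|a\|$; \autoref{ax:Pi2} makes $a \mapsto \sigma(a)$ a $*$-morphism; and \autoref{ax:Complex} puts a compatible complex structure on $F$ commuting with every $\sigma(a)$, so that $\langle \xi,\zeta\rangle = [\xi,\zeta] + i[\xi, i\zeta]$ is a complex inner product making $\sigma$ a complex $*$-representation. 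It is non-degenerate, since $F$ is the closed span of $F^1 = \bigoplus_a E_a$ and each $E_a$ is spanned by $S_a \subseteq \overline{\sigma(a)F_{\le 1}}$ (that last inclusion being exactly \autoref{ax:BallImg}).

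The crux --- and the only place the hypothesis on $A$ enters --- is to identify the sort $S_a$ of $M$, now sitting inside $F$, with $\overline{\sigma(a)F_{\le 1}}$; this is precisely the ``missing points'' issue, the inclusion $\overline{\sigma(a)F_{\le 1}} \subseteq S_a$ being free from \autoref{ax:BallImg}. For the reverse inclusion, given $\xi \in S_a$ and $\varepsilon > 0$, use the hypothesis to pick $b \in A$ with $\|b\| \le 1$ and $\|a - ab\| < \varepsilon$; then \autoref{ax:HausDist} gives $\eta \in S_{ab}$ with $\|\eta - \xi\| < \varepsilon$, \autoref{ax:DenseImg} gives $\zeta \in S_b$ with $\|\pi_a\zeta - \eta\| < \varepsilon$, and $\|\zeta\| \le \|b\| \le 1$ by \autoref{ax:Conv}, so $\sigma(a)\zeta \in \sigma(a)F_{\le 1}$ lies within $2\varepsilon$ of $\xi$. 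Letting $\varepsilon \to 0$ gives $S_a \subseteq \overline{\sigma(a)F_{\le 1}}$, hence equality; since every named symbol of $M$ and of $M(F,\sigma)$ is a restriction of the one structure we reconstructed on $F$, we conclude $M = M(F,\sigma)$. Together with the easy direction and the observation (already in the text) that the canonical gluing map $E \to F$ is an isometric linear bijection intertwining $\pi$ and $\sigma$, this shows the two constructions are mutually inverse, completing the identification of classes and hence the proof.
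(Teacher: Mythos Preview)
Your proposal is correct and follows the paper's own approach essentially verbatim: verify $M(E,\pi)\models T^A$, reconstruct $(F,\sigma)$ from an arbitrary model using \autoref{ax:Conv}--\autoref{ax:Complex}, then use \autoref{ax:BallImg} for one inclusion and the hypothesis on $A$ together with \autoref{ax:HausDist} and \autoref{ax:DenseImg} for the other to identify $S_a$ with $\overline{\sigma(a)F_{\le 1}}$.

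One small slip to fix: in your non-degeneracy paragraph you write ``$S_a \subseteq \overline{\sigma(a)F_{\le 1}}$ (that last inclusion being exactly \autoref{ax:BallImg})'', but \autoref{ax:BallImg} gives the \emph{opposite} inclusion $\overline{\sigma(a)F_{\le 1}} \subseteq S_a$ (as you yourself say correctly two sentences later). The inclusion $S_a \subseteq \overline{\sigma(a)F_{\le 1}}$ you actually need for non-degeneracy is the hard one, which you only establish afterwards using the hypothesis on $A$; so the non-degeneracy claim should be deferred until after that step (or, as the paper does, simply absorbed into the conclusion $M = M(F,\sigma)$).
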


In particular, if $A$ satisfies the hypotheses of \autoref{thm:NonDegenerateRepresentationClass}, then the class of non-degenerate $*$-representations of $A$ is closed under the ultraproduct/ultrapower construction applied to $\cL^A$-structures.
This coincides with the \emph{non-degenerate ultraproduct/ultrapower}, as proposed in \cite[Section~4]{Cherix-Cowling-Straub:FilterProducts}, obtained by taking the Banach space ultraproduct/ultrapower of $E$ (see \autoref{sec:Ultraproduct}), which is naturally a $*$-representation, and taking its non-degenerate part.

\section{Continuous unitary representations of locally compact groups}
\label{sec:ContinuousUnitaryRepresentationsLocallyCompactGroups}

Let $G$ be a topological group, and let $M(G)$ denote the space of regular complex Borel measures on $G$.
For every $\mu \in M(G)$ there exists a unique finite positive measure $|\mu|$ such that $d\mu = \alpha d|\mu|$, where $\alpha\colon G \rightarrow \bC$ is a Borel function into the unit circle.  We set the \emph{total variation} of $\mu$ to be $\|\mu\|=|\mu|(G)$.
For $\mu,\nu \in M(G)$ we may define an \emph{involution} and a \emph{convolution} by
\begin{gather*}
  \mu^*(g) = \overline{i_* \mu},
  \qquad
  \mu * \nu = m_* (\mu \otimes \nu),
\end{gather*}
where $m\colon G^2 \rightarrow G$ is the group law and $i\colon G \rightarrow G$ is inversion.
It is easy to check that $\mu^*$ and $\mu * \nu$ belong to $M(G)$, and
% \begin{gather*}
%   \|\mu^*\| = \|\mu\|, \qquad \|\mu * \nu\| \leq \|\mu\| \|\nu\|.
% \end{gather*}
that equipped with these two operations and with the norm $\|\mu\|$, $M(G)$ is a Banach $*$-algebra.

For every $g \in G$ we have a Dirac measure $\delta_g \in M(G)$, and $\delta_e$ is the unit of $M(G)$.
The map $g \mapsto \delta_g$ is injective and respects the algebraic structure: $\delta_g^* = \delta_{g^{-1}}$, $\delta_g * \delta_h = \delta_{gh}$.
(However, the norm topology on $M(G)$ induces the discrete topology on $G$ under this identification.)
In particular, $G$ acts isometrically on $M(G)$ on either side by convolution with the Dirac measure:
\begin{gather*}
  f \mu h = \delta_f * \mu * \delta_h, \qquad \|f \mu h\| = \|\mu\|.
\end{gather*}

A \emph{continuous (always unitary) representation} of $G$ consists of a Hilbert space $E$ equipped with a continuous unitary action $G \curvearrowright E$, or equivalently, with a continuous morphism $\pi\colon G \rightarrow U(E)$, where $U(E)$ is equipped with the strong (equivalently, weak) operator topology.
This means in particular that we may write $g \xi$ and $\pi(g) \xi$ interchangeably, when $g \in G$ and $\xi \in E$.
For a unitary action $G \curvearrowright E$ to be (jointly) continuous it suffices that for each $\xi \in E$ (separately), the map $g \mapsto g\xi$ be continuous at the identity.

Any continuous representation of $G$ in $E$ can be extended naturally to a $*$-representation of $M(G)$, by
\begin{gather*}
  \pi(\mu) = \int_G \pi(g) d\mu(g).
\end{gather*}
By this we mean that $\pi(\mu) \xi \in E$ is the unique vector such that
\begin{gather}
  \label{eq:RepresentationMG}
  \bigl\langle \pi(\mu) \xi, \zeta \bigr\rangle = \int_G \langle g\xi, \zeta \rangle \, d\mu(g).
\end{gather}
We have $\|\pi(\mu)\| \leq \|\mu\|$ by Cauchy-Schwarz, giving rise to a map $\pi\colon M(G) \rightarrow B(E)$.
It is easy to check that $\pi(\mu^*) = \pi(\mu)^*$ and $\pi(\mu * \nu) = \pi(\mu) \pi(\nu)$, so $\pi$ is indeed a $*$-representation.
In addition, $\pi(\delta_g) = \pi(g)$, so this representation extends the original one via our identification $G \subseteq M(G)$.

\begin{lem}\label{lem:ContinuousRepresentationMG}
  Let $\xi \in E$, and let $\mu \in M(G)$ be a probability measure concentrated on the set $\bigl\{g \in G : \|g \xi - \xi\| \leq r \bigr\}$.
  Then $\|\pi(\mu) \xi - \xi\| \leq r$.
\end{lem}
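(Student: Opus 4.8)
The plan is to estimate $\|\pi(\mu)\xi - \xi\|$ by working with the defining weak formula \eqref{eq:RepresentationMG} for $\pi(\mu)\xi$, testing it against $\zeta = \pi(\mu)\xi - \xi$. Since $\mu$ is a probability measure, $\xi = \int_G \xi \, d\mu(g)$ in the weak sense as well, so
\begin{gather*}
  \bigl\langle \pi(\mu)\xi - \xi, \zeta \bigr\rangle = \int_G \langle g\xi - \xi, \zeta \rangle \, d\mu(g).
\end{gather*}
First I would bound the integrand pointwise: for $\mu$-almost every $g$ we have $g \in \{h : \|h\xi - \xi\| \leq r\}$, hence $|\langle g\xi - \xi, \zeta\rangle| \leq \|g\xi - \xi\|\,\|\zeta\| \leq r\|\zeta\|$ by Cauchy--Schwarz. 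Integrating against the probability measure $\mu$ gives $|\langle \pi(\mu)\xi - \xi, \zeta\rangle| \leq r\|\zeta\|$.

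Now I would specialise to $\zeta = \pi(\mu)\xi - \xi$, which yields $\|\pi(\mu)\xi - \xi\|^2 \leq r\|\pi(\mu)\xi - \xi\|$, and therefore $\|\pi(\mu)\xi - \xi\| \leq r$ after dividing by $\|\pi(\mu)\xi - \xi\|$ (the inequality being trivial when this norm is $0$). This is the whole argument; it is essentially the standard ``averaging a vector that moves little, moves the vector little'' computation, phrased through the weak integral.

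The only point requiring a little care — and the one I would expect to be the main (minor) obstacle — is the measurability and integrability needed to justify the pointwise bound and its integration: one must know that $g \mapsto \langle g\xi - \xi, \zeta\rangle$ is $|\mu|$-measurable (which is part of what makes \eqref{eq:RepresentationMG} well-defined in the first place, by continuity of the representation and Cauchy--Schwarz giving the bound $\|g\xi-\xi\|\,\|\zeta\| \le 2\|\xi\|\,\|\zeta\|$), and that the set $\{g : \|g\xi-\xi\| \le r\}$ on which $\mu$ is concentrated is Borel, so that ``$\mu$-almost every $g$ lies in it'' makes sense. Both are routine given the continuity of $g \mapsto g\xi$; no estimate beyond Cauchy--Schwarz is needed.
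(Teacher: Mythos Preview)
Your argument is correct and essentially identical to the paper's: both establish the inequality $|\langle \pi(\mu)\xi - \xi, \zeta\rangle| \leq r\|\zeta\|$ for all $\zeta$ by subtracting $\xi = \int_G \xi\,d\mu(g)$ from \eqref{eq:RepresentationMG} and applying Cauchy--Schwarz pointwise on the support of $\mu$. The paper is simply terser, leaving the routine details (the subtraction of $\xi$ and the passage from the weak bound to the norm bound) implicit.
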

\begin{proof}
  For all $\zeta \in E$ we have $\bigl| \langle \pi(\mu) \xi - \xi, \zeta \rangle \bigr| \leq r \|\zeta\|$ by a direct application of \autoref{eq:RepresentationMG} and the hypotheses.
\end{proof}

Let us add the hypothesis that $G$ is locally compact, and choose a left Haar measure $H$.
Then $H(f A h) = \Delta(h) H(A)$, where $\Delta \colon G \rightarrow (\bR^{>0},\cdot)$ is the modular function on $G$, a group morphism that only depends on $G$.
We shall write $dg$ for $dH(g)$.

We may identify $\varphi \in L^1(G)$ (with respect to $H$) with $\mu_\varphi \in M(G)$ defined by $d\mu_\varphi = \varphi \, dH$.
The map $\varphi \mapsto \mu_\varphi$ is a linear isometry, so $L^1(G) \subseteq M(G)$ is a Banach subspace.
If $\varphi,\psi \in L^1(G)$, then (under the identification of $\varphi$ with $\mu_\varphi$)
\begin{gather*}
  \varphi^*(g) = \Delta(g^{-1}) \overline{\varphi}(g^{-1})
\end{gather*}
and
\begin{gather*}
  (\psi * \varphi)(g)
  = \int_G \psi(h) \varphi(h^{-1} g) \, dh
  = \int_G \Delta(h^{-1}) \psi(g h^{-1}) \varphi(h) \, dh.
\end{gather*}
More generally, for any $\varphi \in L^1(G)$ and $\mu \in M(G)$:
\begin{gather*}
  (\mu * \varphi)(g)
  = \int_G \varphi(h^{-1} g) \, d\mu(h),
  \qquad
  (\varphi * \mu)(g)
  = \int_G \Delta(h^{-1}) \varphi(g h^{-1}) \, d\mu(h),
\end{gather*}
so
\begin{gather*}
  (\delta_f * \varphi * \delta_h)(g) = \Delta(h^{-1}) \varphi(f^{-1} g h^{-1}).
\end{gather*}

In particular, $L^1(G) \subseteq M(G)$ is a $*$-subalgebra, so to every continuous representation $(E,\pi)$ of $G$ corresponds a canonical $*$-representation of $L^1(G)$, through the restriction of the $*$-representation of $M(G)$:
\begin{gather*}
  \pi(\varphi) = \int_G \varphi(g) \pi(g) \, dg.
\end{gather*}
The usefulness of this representation is due to the following classical fact:
\begin{fct}\label{fct:ContinuousAction}
  Let $\varphi \in L^1(G)$.
  Then $\delta_f * \varphi * \delta_h \rightarrow \varphi$ in $L^1(G)$ as $f,h \rightarrow e$.
\end{fct}
\begin{proof}
  When $\varphi$ is bounded, this follows from dominated convergence, and the general case follows by a density argument.
\end{proof}

Let $(U_\alpha)$ be a basis of compact neighbourhoods of $1$, and let $\alpha \leq \beta$ when $U_\alpha \supseteq U_\beta$.
Let $e_\alpha \in C_c(G) \subseteq L^1(G)$ be continuous, positive, of norm one, supported in $U_\alpha$.
Then as a net, $(e_\alpha)$ is an approximate identity of $L^1(G)$, that is to say that $e_\alpha * \varphi \rightarrow \varphi$ and $\varphi * e_\alpha \rightarrow \varphi$ (in norm) for every $\varphi \in L^1(G)$.
If $(E,\pi)$ is a continuous representation of $G$, then $\pi(e_\alpha) \xi \rightarrow \xi$ for every $\xi \in E$ (e.g., by \autoref{lem:ContinuousRepresentationMG}).
In particular, $E^\pi = E$ and $\pi\colon L^1(G) \rightarrow B(E)$ is non-degenerate.

Conversely, let $\pi\colon L^1(G) \rightarrow B(E)$ be any non-degenerate $*$-representation.
For any $g \in G$, the map $\varphi \mapsto \delta_g * \varphi$ is isometric.
If $\xi \in E$, then $\pi(e_\alpha) \xi \rightarrow \xi$ by non-degeneracy, so $\pi(\delta_g * e_\alpha) \xi$ must converge as well, call its limit $\pi(g) \xi$ or $g\xi$.
This defines a group morphism $\pi\colon G \rightarrow U(E)$.
A combination of \autoref{fct:ContinuousAction} with the rate of convergence of $\pi(e_\alpha) \xi$ to $\xi$ yields that $g \mapsto g\xi$ is continuous at $e$ for any fixed $\xi \in E$, so $\pi\colon G \rightarrow U(E)$ is a continuous representation.

\begin{fct}
  These operations are one the inverse of the other, yielding a bijective correspondence between continuous representations of $G$ on $E$ and non-degenerate $*$-representations of $L^1(G)$ on $E$.
\end{fct}

\begin{proof}
  See Folland~\cite[Theorem 3.11]{Folland:AbstractHarmonicAnalysis}.
\end{proof}

Since $L^1(G)$ admits an approximate identity of norm one, the results of the previous section apply.
In other words, identifying a continuous representation of $G$ with the corresponding representation of $L^1(G)$, and the latter with the corresponding model of $T^{L^1(G)}$, we may view the class of continuous representations of $G$ as elementary.

If $G$ is discrete, a unitary representation of $G$ can also be considered as a single-sorted structure, consisting of the unit ball of a Hilbert space with each unitary operator $\pi(g)$ (restricted to the unit ball) named in the language (see for example by Berenstein \cite{Berenstein:HilbertWithAutomorphismGroups}).
In this case, the Haar measure is (a multiple of) the counting measure, and $\Delta \equiv 1$.
Identifying $g \in G$ with $\delta_g$ we have $G \subseteq L^1(G)$.
Viewing a representation of $G$ as an $\cL^{L^1(G)}$-structure, the sort associated to $\delta_g$ is the entire unit ball, for any $g \in G$, and $\pi(\delta_h)$ acts on it as $\pi(h)$ for all $h \in G$.
In other words, we may recover the single-sorted structure alluded to above as a reduct of the $\cL^{L^1(G)}$-structure.
Conversely, we can interpret the multi-sorted $\cL^{L^1(G)}$-structure in the single-sorted one.
The full details would involve more definitions than interesting results, so we shall omit them.

\section{Ultraproduct constructions}
\label{sec:Ultraproduct}

Let us discuss possible ultraproduct constructions for unitary actions.
Throughout this discussion, $I$ is a set and $\cU$ is an ultrafilter on $I$.
Let $\prodB_\cU E_i$ denote the Banach space ultraproduct of a family of Banach spaces, possibly with additional structure.

In particular, if $E = \prodB_\cU E_i$ and $C = \prodB_\cU B(E_i)$, then $C$ is again a $C^*$-algebra, and there is a natural isometric embedding of $C^*$-algebras:
\begin{gather*}
  \prodB_\cU B(E_i) \subseteq B(E),
\end{gather*}
where $[T_i] \in \prodB_\cU B(E_i)$ acts on $E$ by $[T_i] [\xi_i] = [T_i \xi_i]$.

If each $(E_i,\pi)$ is a unitary representation (not necessarily continuous) of $G$, then $(\prodB_\cU E_i,\pi_\cU)$ is again such a representation, where $\pi_\cU(g) = \bigl[ \pi_i(g) \bigr] \in \prodB_\cU B(E_i) \subseteq B(E)$.
We call this the \emph{naïve} ultraproduct.
Even if each $E_i$ is a continuous representation, the naïve ultraproduct need not be so.
In the literature one finds two main ideas for remedying this deficiency.

First, any unitary representation admits a continuous part:
\begin{dfn}
  \label{dfn:ContinuousPart}
  Let $E$ be a Hilbert space, and $G \rightarrow U(E)$ a unitary representation, not necessarily continuous.
  We define $E^{\mathrm{c}}$ to consist of all $\xi \in E$ for which the map $g \mapsto g \xi$ is continuous.
  We call it the \emph{continuous part} of the unitary representation $E$.

  In particular, if $(E_i)$ are unitary representations (say, continuous, but this is not required for this definition), we define the \emph{continuous ultraproduct} as
  \begin{gather*}
    \prodc_\cU E_i = \left( \prodB_\cU E_i\right)^{\mathrm{c}}.
  \end{gather*}
\end{dfn}

The following is easy:

\begin{fct}
  \label{fct:ContinuousPart}
  With the hypotheses of \autoref{dfn:ContinuousPart}, $E^{\mathrm{c}}$ is a Hilbert subspace of $E$.
  It is moreover $G$-invariant, and the restricted representation $G \rightarrow U(E^c)$ is continuous.

  In particular, the continuous ultraproduct of (continuous) unitary representations of $G$ is a continuous unitary representation.
\end{fct}

For a slightly different, \textit{a priori} stronger, approach, recall that a \emph{semi-norm} on $G$ is a function $\rho\colon G \rightarrow \bR^+$ satisfying $\rho(1) = 0$ and $\rho(g^{-1}f) \leq \rho(g) + \rho(f)$.
It is a \emph{norm} if $\rho(g) = 0$ implies $g = 1$.
Let us write $\{\rho < \varepsilon\}$ for $\bigl\{g \in G : \rho(g) < \varepsilon\bigr\}$.
A semi-norm is continuous if and only if $\{\rho < \varepsilon\}$ is a neighbourhood of $1$ for all $\varepsilon > 0$.
In particular, if $\rho' \leq \rho$ are semi-norms and $\rho$ is continuous, then so is $\rho'$.

Whenever a group $G$ acts on a metric space $X$ by isometry, every $x \in X$ gives rise to a semi-norm $\rho_x(g) = d(x,gx)$.
We encounter semi-norms of this form regularly.
For example, we can restate \autoref{fct:ContinuousAction} as:
\begin{fct}
  Let $\varphi \in L^1(G)$.
  Then $\rho_\varphi(g) = \|g\varphi - \varphi\|_1$ is a continuous semi-norm.
\end{fct}
Similarly, if $E$ is a unitary representation and $\xi \in E$, then $\xi \in E^{\mathrm{c}}$ if and only if $\rho_\xi(g) = \|g\xi - \xi\|$ is continuous.

\begin{dfn}
  \label{dfn:EquicontinuousUltraproduct}
  Let $(E_i : i \in I)$ be continuous unitary representations of $E$.
  We define their \emph{equicontinuous ultraproduct}, denoted $\prodec_\cU E_i$, to consist of all $\xi = [\xi_i] \in \prodB_\cU E_i$ such that for some continuous semi-norm $\rho$, we have $\rho_{\xi_i} \leq \rho$ for all (or equivalently, $\cU$-many) $i$, as well as the limits of such:
  \begin{gather*}
    \prodec_\cU E_i = \overline{\left\{ [\xi_i] \in \prodB_\cU E_i : \rho_{\xi_i} \leq \rho \ \text{for some common continuous semi-norm} \ \rho \right\}}.
  \end{gather*}
\end{dfn}

It is clear that $\prodec_\cU E_i$ is a $G$-invariant Hilbert space, and if $\rho_{\xi_i} \leq \rho$ for some continuous $\rho$ and all $i$, then $\rho_\xi \leq \rho$ as well.
Therefore:
\begin{gather*}
  \prodec_\cU E_i \subseteq \prodc_\cU E_i \subseteq \prodB_\cU E_i.
\end{gather*}

% \begin{quote}
%   Digression: can we have $\prodec_\cU E_i \subsetneq \prodc_\cU E_i$?
%   Assume that
%   \begin{quote}
%     there exists a continuous semi-norm $\rho$ and $\varepsilon > 0$, such that for every neighbourhood $U$ of $1$ and every finite subset $F \subseteq G$, there exists $E$ and $\xi \in E$ such that
%     \begin{itemize}
%     \item $\|\xi\| = 1$
%     \item $\|g \xi - \xi\| \leq \rho(g)$ for all $g \in F$.
%     \item $\|g \xi - \xi\| \geq \varepsilon$ for some $g \in U$.
%     \end{itemize}
%   \end{quote}
%   In this case, we have strict inequality (but the ultrafilter is on an uncountable set).
%   Indeed, let $I$ be the set of such pairs.
% \end{quote}

\begin{prp}
  \label{prp:Ultraproducts}
  Assume that $G$ is locally compact.
  Let $(E_i,\pi_i)$ be continuous representations for $i \in I$, and let $E = \prodB_\cU E_i$.
  Each $\pi_i$ extends to a $*$-morphism $\pi_i\colon M(G) \rightarrow B(E_i)$, giving rise to $\pi \colon M(G) \rightarrow \prodB_\cU B(E_i) \subseteq B(E)$.
  In addition, since $\prodc_\cU E_i = E^c$ is a continuous representation, it gives rise to a $*$-morphism $\sigma \colon M(G) \rightarrow B(E^c)$.
  Then the following are equivalent for $\xi \in E$:
  \begin{enumerate}
  \item $\xi \in E^{\mathrm{c}}$ and $\pi(\varphi) \xi = \sigma(\varphi) \xi$ for all $\varphi \in L^1(G)$.
  \item $\xi$ belongs to the non-degenerate part of the $*$-representation $\pi\colon L^1(G) \rightarrow B(E)$ (i.e., to the non-degenerate ultraproduct in the sense of \cite{Cherix-Cowling-Straub:FilterProducts}).
  \item $\xi \in \prodec_\cU E_i$.
  \end{enumerate}
\end{prp}
\begin{proof}
  \begin{cycprf}
  \item Let $(e_\alpha)$ be any approximate identity of norm one in $L^1(G)$.
    Since $\sigma\colon G \rightarrow U(E^{\mathrm{c}})$ is continuous, the $*$-representation $\sigma\colon L^1(G) \rightarrow B(E^{\mathrm{c}})$ is non-degenerate.
    Therefore $\pi(e_\alpha) \xi = \sigma(e_\alpha) \xi \rightarrow \xi$, so $\xi$ is in the non-degenerate part of $\pi\colon L^1(G) \rightarrow B(E)$.
  \item Assume that $\xi = \pi(\varphi) \zeta$ for $\varphi \in L^1(G)$ and $\zeta = [\zeta_i]$ in $E$.
    We may assume that $\|\zeta_i\| \leq \|\zeta\|$ for all $i$, and that $\xi = [\xi_i]$, where $\xi_i = \pi_i(\varphi) \zeta_i$.
    Then $\rho_{\xi_i} < \|\zeta_i\| \rho_\varphi \leq \|\zeta\| \rho_\varphi$, and $\xi \in \prodec_\cU E_i$.
    Since $\prodec_\cU E_i$ is complete, this is enough.
  \item[\impfirst]
    Finally, let $\xi \in \prodec_\cU E_i \subseteq \prodc_\cU E_i$, and let $\varphi \in L^1(G)$.
    We need to show that $\pi(\varphi) \xi = \sigma(\varphi) \xi$.
    The latter is a closed condition in both $\xi$ and $\varphi$.
    We may therefore assume that $\xi = [\xi_i]$, $\rho$ is a continuous semi-norm, $\rho_{\xi_i} \leq \rho$ for all $i$ (and therefore $\rho_\xi \leq \rho$), and that $\varphi$ has compact support.
    Let $\varepsilon > 0$.
    By a partition of unity argument, we can express $\varphi$ as a finite sum $\sum_{m<n} \varphi_m$, where the $\varphi_m$ are supported on disjoint sets, each contained in a single translate $g_m \{\rho < \varepsilon\}$.
    Then, by Lemma \ref{lem:ContinuousRepresentationMG}, we have
    \begin{gather*}
      \left\| \sigma(\varphi) \xi - \sum_{m<n} \|\varphi_m\| g_m \xi \right\|
      \leq \sum_{m<n} \Bigl\| \sigma(\varphi_m) \xi - \|\varphi_m\| g_m \xi \Bigr\|
      \leq \sum_{m<n} \varepsilon \|\varphi_m\|
      = \varepsilon \|\varphi\|.
    \end{gather*}
    By the same argument, for all $i$ we have
    \begin{gather*}
      \left\| \pi_i(\varphi) \xi_i - \sum_{m<n} \|\varphi_m\| g_m \xi_i \right\|
      \leq \varepsilon \|\varphi\|,
    \end{gather*}
    and therefore, in the ultraproduct,
    \begin{gather*}
      \left\| \pi(\varphi) \xi - \sum_{m<n} \|\varphi_m\| g_m \xi \right\|
      \leq \varepsilon \|\varphi\|.
    \end{gather*}
    It follows that $\|\sigma(\varphi) \xi - \pi(\varphi) \xi\| \leq 2 \varepsilon \|\varphi\|$, and since $\varepsilon$ was arbitrary, $\sigma(\varphi) \xi = \pi(\varphi) \xi$.
  \end{cycprf}
\end{proof}

This means in particular that the non-degenerate ultraproduct of $*$-representations of $L^1(G)$ and the equicontinuous ultraproduct of representations of $G$ are, in essence, the same construction.

\begin{qst}
  Find an example where $\prodec_\cU E_i \subsetneq \prodc_\cU E_i$ (or show that this can never happen).
\end{qst}

\section{Property (T)}
\label{sec:PropertyT}

Suppose that $G$ is a topological group, $Q\subseteq G$, and $\varepsilon > 0$.
Let $E$ be a unitary representation of $G$.

A vector $\xi\in E$ is \emph{$(Q,\varepsilon)$-invariant} if $\sup_{g\in Q}\|g\xi-\xi\| < \varepsilon\|\xi\|$.
Note, in particular, that a $(Q,\varepsilon)$-invariant vector must be nonzero.
We say that $\xi\in E$ is \emph{$G$-invariant} if $g\xi=\xi$ for all $g\in G$.
We say that $(Q,\varepsilon)$ is a \emph{Kazhdan pair} for $G$ if, whenever $E$ is a unitary representation of $G$ with a $(Q,\varepsilon)$-invariant vector, then $E$ has a nonzero $G$-invariant vector.
If there is $\varepsilon>0$ such that $(Q,\varepsilon)$ is a Kazhdan pair for $G$, then we say that $Q$ is a \emph{Kazhdan set} for $G$.
Finally, $G$ is said to have \emph{property (T)} if it has a compact Kazhdan set.

\begin{fct}
  \label{fct:LocallyCompactPropertyT}
  A locally compact group $G$ with property (T) is compactly generated.
  Moreover, if $Q \subseteq G$ has non-empty interior, then it is generating if and only if it is a Kazhdan set.
\end{fct}
\begin{proof}
  See \cite[Theorem~1.3.1 and Proposition~1.3.2]{Bekka-DeLaHarpe-Valette:KazhdanT}.
\end{proof}

For each $\varphi\in L^1(G)$, we let
\begin{gather*}
  \Fix_\varphi = \{\xi\in S_\varphi : g\xi = \xi \ \text{for all} \ g \in G\}.
\end{gather*}
For $\varphi \in L^1(G)$ and $m \in \bN$, define an open neighbourhood of the identity by
\begin{gather*}
  U_{\varphi,m} = \left\{ g\in G \ : \ \|\varphi-g\varphi\|_1 < 2^{-m} \right\}.
\end{gather*}
For $K \subseteq G$ compact, choose $K_{\varphi,m} \subseteq K$ finite so that $K \subseteq K_{\varphi,m} U_{\varphi,m}$.

Consider continuous unitary representations of $G$, as models of $T^{L^1(G)}$, in the language $\cL^{L^1(G)}$, as described in \autoref{sec:NonDegenerateStarRepresentations}.
Let
\begin{gather*}
  \alpha_{K,\varphi,m}(x) = \max_{g\in K_{\varphi,m}} d(x,gx),
  \qquad
  \Phi_{K,\varphi}(x) = \sup_m 2^{-m}\alpha_{K,\varphi,m}(x).
\end{gather*}
Both are definable predicates in a single variable $x$ of sort $S_\varphi$.

If $E$ is such a representations and $\xi \in \Fix_\varphi$, then $\Phi_{K,\varphi}(\xi) = 0$.
Conversely, if $\xi \in S_\varphi$ and $\Phi_{K,\varphi}(\xi) = 0$, then $g \xi = \xi$ for all $g \in K$.
In particular, if $K$ is a compact generating set, then
\begin{gather*}
  \xi \in \Fix_\varphi \qquad \Longleftrightarrow \qquad \Phi_{K,\varphi}(\xi) = 0.
\end{gather*}

\begin{thm}
  \label{thm:PropertyT}
  Suppose that $G$ is a compactly generated locally compact group.
  Then $G$ has property (T) if and only if, for each $\varphi\in L^1(G)$, we have that $\Fix_\varphi$ is a definable subset of $S_\varphi$ in the sense of the theory $T^{L^1(G)}$.
\end{thm}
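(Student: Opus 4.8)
The overall plan is to work throughout with a fixed compact generating set $K\subseteq G$ with non-empty interior — such a $K$ exists since $G$ is locally compact and compactly generated (thicken a compact generating set by a compact neighbourhood of the identity) — and to study the definable predicate $\Phi_{K,\varphi}$, keeping in mind that $\Fix_\varphi=Z(\Phi_{K,\varphi})$ because $K$ generates. The crucial preliminary observation is that on the sort $S_\varphi$ the predicate $\Phi_{K,\varphi}$ controls the defect of $K$-invariance $s_\varphi(x):=\sup_{g\in K}d(x,gx)$ two-sidedly, uniformly over all models of $T^{L^1(G)}$: trivially $\Phi_{K,\varphi}(x)\le s_\varphi(x)$ since each $K_{\varphi,m}\subseteq K$; conversely, writing an arbitrary $g\in K$ as $g=hu$ with $h\in K_{\varphi,m}$ and $u\in U_{\varphi,m}$, and using that $d(x,ux)\le\|u\varphi-\varphi\|_1<2^{-m}$ for every $x\in S_\varphi$ (each such $x$ being a limit of vectors $\pi(\varphi)\eta$ with $\|\eta\|\le1$), one gets $s_\varphi(x)\le 2^m\Phi_{K,\varphi}(x)+2^{-m}$ for every $m$, hence $s_\varphi(x)\le\beta(\Phi_{K,\varphi}(x))$ for the modulus $\beta(t)=\inf_m(2^mt+2^{-m})$. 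I will also use the standard criterion: the zero-set $Z(P)$ of a definable predicate $P$ is a definable set (relative to a theory $T$) if and only if $d(x,Z(P))\le\alpha(P(x))$ in all models of $T$ for some modulus $\alpha$ — the forward direction by compactness of the relevant type space, the converse via the identity $d(x,Z(P))=\inf_y\bigl(\alpha(P(y))+d(x,y)\bigr)$. Thus the theorem reduces to the assertion that such a modulus exists for $P=\Phi_{K,\varphi}$ (for every $\varphi$) precisely when $G$ has property (T).

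For ``property (T) $\Rightarrow$ definable'', fix $\varepsilon_0>0$ with $(K,\varepsilon_0)$ a Kazhdan pair, legitimate by \autoref{fct:LocallyCompactPropertyT}. Let $M\models T^{L^1(G)}$, identified via \autoref{thm:NonDegenerateRepresentationClass} with a continuous unitary representation $(F,\sigma)$ of $G$, let $\xi\in S_\varphi^M$, and let $P\colon F\to F^G$ be the orthogonal projection onto the $G$-fixed vectors. Two points are needed. First, $P$ commutes with every $\sigma(\psi)$: indeed $(F^G)^\perp$ is $\sigma(\psi)$-invariant because $\langle\sigma(\psi)\eta,\zeta\rangle=\langle\eta,\sigma(\psi^*)\zeta\rangle=\overline{\textstyle\int\psi^*}\,\langle\eta,\zeta\rangle=0$ for $\eta\perp F^G$ and $\zeta\in F^G$, and likewise for $\sigma(\psi)^*=\sigma(\psi^*)$. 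Writing $\xi=\lim_k\sigma(\varphi)\eta_k$ with $\|\eta_k\|\le1$, it follows that $P\xi=\lim_k\sigma(\varphi)(P\eta_k)\in\overline{\sigma(\varphi)F_{\le1}}=S_\varphi^M$, so $P\xi\in S_\varphi^M\cap F^G=\Fix_\varphi^M$. Second, the subrepresentation $(F^G)^\perp$ has no non-zero $G$-invariant vector, hence, $(K,\varepsilon_0)$ being a Kazhdan pair, no $(K,\varepsilon_0)$-invariant vector; that is, $\sup_{g\in K}\|g\zeta-\zeta\|\ge\varepsilon_0\|\zeta\|$ for all $\zeta\in(F^G)^\perp$. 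Applying this to $\zeta=\xi-P\xi$ (and $g(P\xi)=P\xi$) gives $\|\xi-P\xi\|\le\varepsilon_0^{-1}s_\varphi(\xi)\le\varepsilon_0^{-1}\beta(\Phi_{K,\varphi}(\xi))$, so $d(\xi,\Fix_\varphi^M)\le\|\xi-P\xi\|\le\alpha(\Phi_{K,\varphi}(\xi))$ with $\alpha=\varepsilon_0^{-1}\beta$. Since $\alpha$ is model-independent, $\Fix_\varphi$ is definable for every $\varphi$.

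For the converse I argue contrapositively. Assume $G$ lacks property (T), and fix $\varphi_0\in C_c(G)$ with $\varphi_0\ge0$, $\int\varphi_0=1$ and $\mathrm{supp}\,\varphi_0\subseteq K$. Put $K'=K^{-1}KK$, a compact set containing $K$; as $G$ has no compact Kazhdan set, $K'$ is not a Kazhdan set, so for each $n$ there is a continuous unitary representation $(E_n,\pi_n)$ with $E_n^G=\{0\}$ and a unit vector $\xi_n\in E_n$ with $\sup_{k\in K'}\|k\xi_n-\xi_n\|<1/n$. Set $x_n=\pi_n(\varphi_0)\xi_n\in S_{\varphi_0}^{M(E_n,\pi_n)}$. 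From the identity $g\pi_n(\varphi_0)\xi_n-\pi_n(\varphi_0)\xi_n=\int\varphi_0(h)\,\pi_n(h)\bigl(\pi_n(h^{-1}gh)\xi_n-\xi_n\bigr)\,dh$ and $h^{-1}gh\in K'$ for $g\in K$, $h\in\mathrm{supp}\,\varphi_0$, one gets $\sup_{g\in K}\|gx_n-x_n\|\le\sup_{k\in K'}\|k\xi_n-\xi_n\|<1/n$, hence $\Phi_{K,\varphi_0}(x_n)<1/n$; and \autoref{lem:ContinuousRepresentationMG} gives $\|x_n-\xi_n\|\le1/n$, so $\|x_n\|\ge1-1/n$. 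Since $\Fix_{\varphi_0}^{M(E_n,\pi_n)}=S_{\varphi_0}\cap E_n^G=\{0\}$, we have $d(x_n,\Fix_{\varphi_0})=\|x_n\|\ge1-1/n$. Now take an ultraproduct $M=\prod_{\mathcal U}M(E_n,\pi_n)\models T^{L^1(G)}$ and put $x=[x_n]\in S_{\varphi_0}^M$. As $\Phi_{K,\varphi_0}$ is a definable predicate, $\Phi_{K,\varphi_0}^M(x)=\lim_{\mathcal U}\Phi_{K,\varphi_0}(x_n)=0$, so (as $K$ generates) $x\in\Fix_{\varphi_0}^M$, i.e.\ $d(x,\Fix_{\varphi_0}^M)=0$; yet $\|x\|=\lim_{\mathcal U}\|x_n\|\ge1$. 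If $\Fix_{\varphi_0}$ were definable, then $d(\cdot,\Fix_{\varphi_0})$ would be a definable predicate, hence commute with ultraproducts, forcing $0=d(x,\Fix_{\varphi_0}^M)=\lim_{\mathcal U}d(x_n,\Fix_{\varphi_0})\ge1$ — a contradiction. So $\Fix_{\varphi_0}$ is not definable.

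I expect the main obstacle to be the forward direction: transporting the usable quantitative content of property (T) — the spectral gap $\sup_{g\in K}\|g\zeta-\zeta\|\ge\varepsilon_0\|\zeta\|$ on $(E^G)^\perp$ — into the many-sorted structure. The two points that make this work, and that one must not overlook, are that the orthogonal projection onto $E^G$ commutes with $\sigma(L^1(G))$ and so preserves each sort $S_\varphi$ (whence $P\xi$ is an honest element of $\Fix_\varphi$, not merely close to it), and that $\Phi_{K,\varphi}$ genuinely dominates $\sup_{g\in K}\|g\xi-\xi\|$ up to a model-independent modulus — which is precisely the reason the finite sets $K_{\varphi,m}$ and the neighbourhoods $U_{\varphi,m}$ were built into the definition. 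The remaining ingredients (choosing $K$ with non-empty interior, enlarging $K$ to $K^{-1}KK$ in the converse so that $\pi(\varphi_0)\xi$ inherits almost-invariance, and the ultraproduct bookkeeping) are routine.
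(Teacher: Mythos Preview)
Your proof is correct. The forward direction follows the same strategy as the paper's --- convert smallness of $\Phi_{K,\varphi}(\xi)$ into $(K,\text{small})$-invariance of $\xi$, then invoke the Kazhdan pair to produce a nearby fixed vector inside $S_\varphi$ --- but you make the mechanism explicit: you show that the orthogonal projection $P$ onto $F^G$ commutes with each $\sigma(\psi)$ and therefore sends $S_\varphi$ into $\Fix_\varphi$, and then bound $\|\xi-P\xi\|$ by the spectral gap. The paper instead cites \cite[Proposition~1.1.9]{Bekka-DeLaHarpe-Valette:KazhdanT} for the existence of a nearby fixed $\zeta$ with $\|\zeta\|\le\|\xi\|$ and asserts $\zeta\in S_\varphi$ from that bound; your commutation argument is really what justifies this step.

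The converse directions genuinely diverge. The paper argues \emph{directly}: taking $\varphi=\tfrac{1_K}{H(K)}$ and assuming $\Fix_\varphi$ is definable, it extracts a $\delta>0$ with $\Phi_\varphi(\xi)<\delta\Rightarrow d(\xi,\Fix_\varphi)<\tfrac12$, then for an arbitrary $(K,\varepsilon)$-invariant unit vector $\xi$ pushes $\hat\xi=\pi(\varphi)\xi$ into $S_\varphi$ via \autoref{lem:ContinuousRepresentationMG}, checks $\hat\xi$ is $(K,3\varepsilon)$-invariant, and reads off a nonzero fixed vector nearby --- thus exhibiting a concrete Kazhdan pair $(K,\varepsilon)$. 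You instead run the \emph{contrapositive through an ultraproduct}: from the failure of property~(T) you build representations $E_n$ with $E_n^G=0$ and vectors $x_n\in S_{\varphi_0}$ of norm near $1$ with $\Phi_{K,\varphi_0}(x_n)\to0$ (your conjugation trick with $K'=K^{-1}KK$ replacing the paper's use of \autoref{lem:ContinuousRepresentationMG}), so that in $\prod_\cU M(E_n,\pi_n)$ the class $x=[x_n]$ lies in $\Fix_{\varphi_0}$ while $d(x_n,\Fix_{\varphi_0})\not\to0$, contradicting definability. Your route is the natural model-theoretic one (a set is definable iff its distance predicate commutes with ultraproducts) and avoids the $\delta$--$\varepsilon$ bookkeeping; the paper's route is constructive, yielding an explicit Kazhdan constant from the definability modulus.
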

\begin{proof}
  First suppose that $G$ has property (T).
  Let $K$ be a compact generating set for $G$, and let $\varphi \in L^1(G)$ be arbitrary.
  Since the result is trivial for $\varphi=0$, we assume that $\varphi \neq 0$.
  By \autoref{fct:LocallyCompactPropertyT}, $K$ is a Kazhdan set for $G$.

  Let $\varepsilon>0$ be such that $(K,\varepsilon)$ is a Kazhdan pair for $G$.
  We already know that $\Fix_\varphi$ is the zero-set of $\Phi_{K,\varphi}$, and we need to show that if $\Phi_{K,\varphi}(\xi)$ is small, then $\xi$ is close to $\Fix_\varphi$.
  Indeed, assume that $\Phi_{K,\varphi}(\xi) < 2^{-2m-2}$.
  Then $\alpha_{K,\varphi,m+1}(\xi) < 2^{-m-1}$.
  Let $f \in K$.
  Then $f = gh$, where $g \in K_{\varphi,m+1}$ and $h \in U_{\varphi,m+1}$.
  The former implies that $\|\xi - g\xi\| < 2^{-m-1}$, and the latter that $\|\xi - h\xi\| < 2^{-m-1}$.
  Since the action of $g$ is isometric,
  \begin{gather*}
    \|\xi - f \xi\|
    \leq \| \xi - g\xi\| + \| g\xi - gh \xi \|
    \leq \| \xi - g\xi\| + \| \xi - h \xi \|
    < 2^{-m}.
  \end{gather*}
  In other words, $\xi$ is $(K,2^{-m})$-invariant.
  By \cite[Proposition~1.1.9]{Bekka-DeLaHarpe-Valette:KazhdanT} there exists a $G$-invariant vector $\zeta \in E$ such that $\|\xi-\zeta\| \leq \frac{\|\xi\|}{2^m \varepsilon}$ and $\|\zeta\| \leq \|\xi\| \leq \|\varphi\|_1$.
  The latter, together with the fact that $\zeta$ is fixed, implies that $\zeta \in S_\varphi$, so $\zeta \in \Fix_\varphi$.
  Thus indeed, if $\Phi_{K,\varphi}(\xi)$ is sufficiently small, then $\xi$ is as close as desired to $\Fix_\varphi$.

  For the converse implication, let us fix a compact generating $K$ for $G$, with non-empty interior.
  Set $\varphi = \frac{1_K}{H(K)} \in L^1(G)$.
  Then by hypothesis, $\Fix_\varphi$ is definable in $S_\varphi$, and we shall prove that $(K,\varepsilon)$ is a Kazhdan pair for $G$ for $\varepsilon > 0$ small enough.
  Let $\mu\in M(G)$ be such that $d\mu=\varphi dH$.
  This is a probability measure on $G$ concentrated on $K$.
  Since $\Fix_\varphi$ is a definable set and equals the zeroset of $\Phi_\varphi$, there is $\delta > 0$ such that, if $\Phi_\varphi(\xi) < \delta$, then $d(\xi,\Fix_\varphi)<\frac{1}{2}$.
  Take $\varepsilon > 0$ small enough so that if $\xi\in S_\varphi$ is $(K,3\varepsilon)$-invariant, then $\Phi_\varphi(\xi) < \delta$.
  Now suppose that $\xi$ is a $(K,\varepsilon)$-invariant unit vector, and let $\hat{\xi} = \pi(\varphi)\xi$.
  By \autoref{lem:ContinuousRepresentationMG}, $\|\hat{\xi}-\xi\| \leq \varepsilon$, so $\hat{\xi}$ is a $(K,3\varepsilon)$-invariant vector.
  Since $\hat{\xi} \in S_\varphi$, it follows that $\Phi_\varphi(\hat{\xi}) < \delta$, whence $d(\hat{\xi},\Fix_\varphi) < \frac{1}{2}$.
  If $\zeta\in \Fix_\varphi$ is such that $d(\hat{\xi},\zeta)<\frac{1}{2}$, then $d(\xi,\zeta) < \varepsilon + \frac{1}{2}$.
  We may assume that $\varepsilon <  \frac{1}{2}$, so $\zeta\not=0$.
  We have thus found a nonzero $G$-invariant vector, which shows that $(K,\varepsilon)$ is a Kazhdan pair for $G$.
\end{proof}

\bibliographystyle{begnac}
\bibliography{begnac}

\end{document}